 \newtheorem{thm}{Theorem}[section]
 \newtheorem{prop}[thm]{Proposition}
 \newtheorem{rem}[thm]{Remark}
 \newtheorem{defn}[thm]{Definition}
 \theoremstyle{definition}
 \theoremstyle{remark}
\begin{document}

\title[Operator representation of frames]
{Study on operator representation of frames in Hilbert spaces}

\author[J. Cheshmavar]{Jahangir Cheshmavar$^{*}$}

\address{Department of Mathematics, Payame Noor University (PNU), P.O.Box 19395-3697, Tehran, Iran}

\email{j$_{_-}$cheshmavar@pnu.ac.ir}

\author[A. Dallaki]{ Ayyaneh Dallaki}

\address{Department of Mathematics, Payame Noor University (PNU), P.O.Box 19395-3697, Tehran, Iran}

\email{ayyanehdallaki@student.pnu.ac.ir}

\thanks{ 2010 Mathematics Subject Classification: 42C15}

\keywords{Frames, Operator representation of frames, Spectrum,
Spectral radius.\\
\indent $^{*}$ Corresponding author} \maketitle

\begin{abstract}
The purpose of this paper is to give an overview of the operator
structure of frames $\{f_k\}_{k=1}^{\infty}$ as
$\{T^k\phi\}_{k=0}^{\infty}$, where the operator $T:\mathcal{H}
\rightarrow \mathcal{H}$ belongs to certain classes of linear
operators and the element $\phi$ belongs to $\mathcal{H}$. We
discuss the size of the set of such elements. Also, for a given
frame $\{f_k\}_{k=1}^{\infty}$ and any $n\in \mathbb{N}$, some
results are obtained for
$T^n(\{f_k\}_{k=1}^{\infty})=\{T^nf_1,T^nf_2,\cdots\}$. Finally,
we conclude this note by raising several questions connecting
frame theory and operator theory.
\end{abstract}

\section{\textbf{Introduction }}
The system of iterations $\{T^k\phi\}_{k=0}^{\infty}$, where $T$
is a bounded linear operator on a separable Hilbert space
$\mathcal{H}$ and $\phi \in \mathcal{H}$, the so-called dynamical
sampling problem is a relatively new research topic in Harmonic
analysis. This topic has been studied since the work of Aldroubi
and Petrosyan \cite{Aldroubi1.2016} for some new results
concerning frames and Bessel systems, see for example Aldroubi et
al. \cite{Aldroubi2.2017} to study dynamical sampling problem in
finite dimensional spaces. Christensen et al.
\cite{Christensen.2017,Christensen.2019,Christensen.2019.} studied
some properties of systems arising via iterated actions of
operators and also, frame properties of operator orbits. For more
details, we refer the interested reader to the
\cite{Aldroubi1.2016,Bayart,Christensen.2016}.


Let $F:=\{f_k\}_{k=1}^{\infty}$ be a frame for $\mathcal{H}$ which
spans an infinite dimensional subspace. A natural question to ask
is whether there exists a linear operator $T$ such that
$f_{k+1}=Tf_k$, for all $k\in \mathbb{N}$?

In \cite{Christensen.2019.}, it was proved that such an operator
exists if and only if $F$ is linearly independent; also, $T$ is
bounded if and only if the kernel of the synthesis operator of $F$
is invariant under the right shift operator on
$\ell^2(\mathbb{N})$, in the affirmative case,
$F=\{T^kf_1\}_{k=0}^{\infty}$.

In this note, for given frame $F$, some results are obtained for
$T^n(F)=\{T^nf_1,T^nf_2,\cdots\}$. Note that the problem
considered in this note is of some interest from other points of
view. Indeed, assuming that the system
$\{T^k\phi\}_{k=0}^{\infty}$ is a frame for $\mathcal{H}$. It is
natural to ask for a characterization of $\mathcal{V}(T)$: the set
of all $\phi\in \mathcal{H}$ such that
$\{T^k\phi\}_{k=0}^{\infty}$ is a frame for $\mathcal{H}$. In
\cite{Christensen.2019}, $\mathcal{V}(T)$ is obtained by applying
all invertible operators from the set of commutant ${T}^{'}$ of
$T$ to $\phi$. The chief aim of this paper is to understand the
size of the set $\mathcal{V}(T)$. Also, we show that the set of
all invertible operators $T \in B(\mathcal{H})$ for which
$\{T^k\phi\}_{k=0}^{\infty}$ is a frame for $\mathcal{H}$ for some
$\phi \in \mathcal{H}$, is open. Finally, we close this work by
raising some questions connecting frame theory and operator
theory.


In the following we denote by $\mathcal{H}$ a separable Hilbert
space. We use $B(\mathcal{H})$ for the set of all bounded linear
operators on $\mathcal{H}$, $\mathbb{N}$ for the natural numbers
as the index set and we pick $\mathbb{N}_0:=\mathbb{N} \cup
\{0\}$. The spectrum of an operator $T \in B(\mathcal{H})$ is
denoted by $\sigma(T)$, which is defined as $$\sigma(T)=\{\lambda
\in \mathbb{C}: T-\lambda I \,\ \mbox{is not invertible}\},$$ and
the spectral radius of $T$ is denoted by $r(T)$, which is defined
as
$$r(T)=\sup\{|\lambda|: \lambda \in \sigma(T)\}.$$ Given an
operator $\Lambda$, we denote its domain by $dom(\Lambda)$ and its
range by $ran(\Lambda)$.
\smallskip
\begin{defn}\cite{Christensen.2016}
 A sequence of vectors $F$ in
$\mathcal{H}$ is a \textit{frame} for $\mathcal{H}$ if there exist
constants $A,B>0$ so that
\begin{eqnarray}
\label{100} A \parallel f\parallel^2\leq
\sum_{k=1}^{\infty}\mid\langle f, \;f_k\rangle \mid^2\leq B
\parallel f\parallel^2~,
\end{eqnarray}
for all $f \in \mathcal{H}$.
\end{defn}
It follows from the definition that if $F$ is a frame for
$\mathcal{H}$, then
\begin{eqnarray}\label{2}
\overline{span}F=\mathcal{H}~.
\end{eqnarray}

If $A=B$, then $F$ is called a \textit{tight frame} and if $A =B
=1$, then $F$ is called a \textit{Parseval frame or a normalized
tight frame}. Also, $F$ is called a \textit{Bessel sequence} if
at least the upper condition in (\ref{100}) holds.\\

For any sequence $F$ in $\mathcal{H}$, the associated
\textit{synthesis operator} is defined by
\begin{eqnarray*}
U_F:D_1(F) \rightarrow \mathcal{H};\quad\
U_F(\{c_k\}_{k=1}^{\infty})=\sum_{k=1}^{\infty}c_k f_k~,
\end{eqnarray*}
 where
\begin{eqnarray*}
D_1(F):=\left\{\{c_k\}_{k=1}^{\infty}\in \ell^2(\mathbb{N}):\,\
\sum_{k=1}^{\infty}c_kf_k \,\ \mbox{converges in}\,\
\mathcal{H}\right\}.
\end{eqnarray*}
The \textit{analysis operator} is defined by
\begin{eqnarray*}
U^{\ast}_F:D_2(F) \rightarrow \ell^2(\mathbb{N});\quad\ U^{\ast}_F
f=\{\langle f, \;f_k \rangle\}_{k=1}^{\infty}~,
\end{eqnarray*}
 where
\begin{eqnarray*}
D_2(F):=\left\{f \in \mathcal{H}: \,\ \{\langle f,
\;f_k\rangle\}_{k=1}^{\infty}\in \ell^2(\mathbb{N})\right\}.
\end{eqnarray*}

Also, the \textit{frame operator} is defined by
\begin{eqnarray*}
S_F:D_3(F) \rightarrow \mathcal{H};\quad\
S_Ff=\sum_{k=1}^{\infty}\langle f, \;f_k \rangle f_k~,
\end{eqnarray*}
 where
\begin{eqnarray*}
D_3(F):=\left\{f \in \mathcal{H}: \,\ \sum_{k=1}^{\infty}\langle
f, \;f_k\rangle f_k \,\ \mbox{converges in}\,\
\mathcal{H}\right\}.
\end{eqnarray*}
\smallskip
\begin{rem}
It is known that if $F$ is at least a Bessel sequence, then
\begin{itemize}
\item [{(i)}] the operator $U_F$ is well-defined and
$D_1(F)=\ell^2(\mathbb{N})$, \item [{(ii)}] the operators $S_F$
and $U^{\ast}_F$ are well-defined and the domain of both is
$\mathcal{H}$.
\end{itemize}
\end{rem}

\begin{defn}\cite{Christensen.2016}
A sequence of vectors $F$ in $\mathcal{H}$ is a \textit{Riesz
basis} for $\mathcal{H}$ with bounds $0<A\leq B<\infty$, if $F$
satisfy (\ref{2}) and for every scalar sequence $(c_k)_k \in
\ell^2$, one has
\begin{eqnarray}
\label{10} A \sum_k |c_k|^2\leq \|\sum_k c_kf_k\|^2\leq B \sum_k
|c_k|^2~.
\end{eqnarray}
\end{defn}

We have the following result which is well known in frame theory
in general case, (see for example \cite{Blan.2001}), we include a
sketch of the proof.

\begin{prop}
Suppose that $F$ is a frame (not tight frame) for a Hilbert space
$\mathcal{H}$ with bounds $A$ and $B$, respectively, and frame
operator $S_F$. If $f_{j} \in F$ such that
$||S_F^{-1/2}f_{j}||\leq \sqrt{\frac{A}{B}}$, then $F\setminus
\{f_{j}\}$ is a frame for $\mathcal{H}$.
\end{prop}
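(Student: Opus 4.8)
The plan is to reduce the assertion to the corresponding statement for the canonical Parseval frame associated with $F$, where the role of the hypothesis becomes transparent. Recall that since $F$ is a frame with bounds $A,B$, its frame operator $S_F$ is a bounded, positive, self-adjoint and invertible operator with $A I \le S_F \le B I$; consequently $S_F^{1/2}$ and $S_F^{-1/2}$ are bounded, positive, self-adjoint and invertible as well, with spectra contained in $[\sqrt{A},\sqrt{B}]$ and $[1/\sqrt{B},1/\sqrt{A}]$. Setting $g_k := S_F^{-1/2} f_k$, the system $\{g_k\}_{k=1}^{\infty}$ is a Parseval frame, since its frame operator is $S_F^{-1/2} S_F S_F^{-1/2} = I$; that is, $\sum_{k=1}^{\infty} |\langle h, g_k\rangle|^2 = \|h\|^2$ for every $h \in \mathcal{H}$.

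First I would record that, because $f_k = S_F^{1/2} g_k$ and $S_F^{1/2}$ is bounded and invertible, the family $\{f_k\}_{k \neq j}$ is a frame for $\mathcal{H}$ if and only if $\{g_k\}_{k\neq j}$ is. This is the standard fact that the frame property is preserved under bounded invertible operators. Hence it suffices to prove that $\{g_k\}_{k\neq j}$ is a frame for $\mathcal{H}$.

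Next I would carry out the elementary estimate for the Parseval frame. For every $h \in \mathcal{H}$,
\begin{eqnarray*}
\sum_{k \neq j} |\langle h, g_k\rangle|^2 = \|h\|^2 - |\langle h, g_j\rangle|^2,
\end{eqnarray*}
which is at most $\|h\|^2$, giving the upper bound $1$, and, by the Cauchy--Schwarz inequality, at least $(1 - \|g_j\|^2)\|h\|^2$. It then remains only to check that $1 - \|g_j\|^2 > 0$. Here the hypothesis enters: $\|g_j\| = \|S_F^{-1/2} f_j\| \le \sqrt{A/B}$, and since $F$ is \emph{not} tight we have $A < B$, so that $\sqrt{A/B} < 1$ and $1 - \|g_j\|^2 \ge (B-A)/B > 0$. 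Thus $\{g_k\}_{k\neq j}$ is a frame with bounds $(B-A)/B$ and $1$, and by the previous step $F \setminus \{f_j\}$ is a frame for $\mathcal{H}$.

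The computation itself is routine; the point requiring care --- and the conceptual heart of the argument --- is the passage to the Parseval frame via $S_F^{-1/2}$, which is exactly what makes the quantity $\|S_F^{-1/2} f_j\|$ appearing in the hypothesis the natural one to control. I would also emphasize that the non-tightness assumption is used in an essential way: it is precisely what upgrades the hypothesis $\|g_j\| \le \sqrt{A/B}$ to the strict inequality $\|g_j\| < 1$ needed to guarantee a positive lower frame bound after removing $f_j$.
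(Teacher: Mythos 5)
Your proof is correct and follows essentially the same route as the paper's: both pass to the canonical Parseval frame $\{S_F^{-1/2}f_k\}_{k=1}^{\infty}$, use the Cauchy--Schwarz inequality to show the system with $S_F^{-1/2}f_j$ removed retains the lower bound $1-\frac{A}{B}$, and then transfer the frame property back through the bounded invertible operator $S_F^{\pm 1/2}$. The only difference is that you spell out details the paper leaves implicit (the upper frame bound and the role of non-tightness in making $1-\frac{A}{B}$ strictly positive), which is fine.
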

\begin{proof}
Let $f_{j} \in F$ with $||S_F^{-1/2}f_{j}||\leq
\sqrt{\frac{A}{B}}$. It is known that
$\{S_F^{-1/2}f_{k}\}_{k=1}^{\infty}$ is a Parseval frame for
$\mathcal{H}$, (see \cite[Corollary 6.3.5]{Heil.1990} and
\cite[Theorem III.2]{Casazza.2000}), so for any $h\in
\mathcal{H}$, we obtain
\begin{eqnarray*}
\sum_{k=1}^{\infty}|\langle h, \;S_F^{-1/2}f_{k}
\rangle|^{2}-|\langle h, \;S_F^{-1/2}f_{j} \rangle|^{2}&\geq&
||h||^{2}-||S_F^{-1/2}f_{j}||^{2}||h||^{2} \\
&\geq& (1-\frac{A}{B})||h||^{2}.
\end{eqnarray*}
That is, the sequence $S_F^{-1/2}(F\setminus \{f_{j}\})$ is a
frame for $\mathcal{H}$ with lower frame bound $1-\frac{A}{B}$.
Since $S_F^{-1/2}$ is an invertible operator on $\mathcal{H}$,
thus $F\setminus \{f_{j}\}$ is  a frame for $\mathcal{H}$ as well.
\end{proof}

\bigskip

The availability of the representation
$F=\{T^kf_1\}_{k=0}^{\infty}$ is characterized in
\cite{Christensen.2019.}:

\begin{prop}
Consider any sequence $F$ in $\mathcal{H}$ for which $span F$ is
infinite-dimensional. Then the following are equivalent:
\begin{itemize}
\item[(i)] $F$ is linearly independent. \item[(ii)] There exists a
linear operator $T:span F \rightarrow \mathcal{H}$ such that

$F=\{T^kf_1\}_{k=0}^{\infty}$~.
\end{itemize}
\end{prop}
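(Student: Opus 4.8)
The plan is to prove the two implications separately, treating $(i)\Rightarrow(ii)$ as a purely algebraic construction and $(ii)\Rightarrow(i)$ by contraposition, where the infinite-dimensionality hypothesis is doing the essential work. For $(i)\Rightarrow(ii)$ I would observe that a linearly independent family is a Hamel basis of its algebraic span, so every element of $\mathrm{span}\,F$ has a \emph{unique} expression as a finite combination $\sum_k c_k f_k$. I then define $T$ on this basis by $Tf_k:=f_{k+1}$ and extend linearly, i.e. $T\big(\sum_k c_k f_k\big):=\sum_k c_k f_{k+1}$. Linear independence is exactly what guarantees well-definedness here: a dependent family would permit two representations of the same vector with different images. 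Since each $f_{k+1}\in\mathrm{span}\,F$, the map sends $\mathrm{span}\,F$ into itself and may be iterated, and a one-line induction gives $T^k f_1=f_{k+1}$ for all $k\geq 0$, so that $F=\{T^k f_1\}_{k=0}^{\infty}$.

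For $(ii)\Rightarrow(i)$ I would argue the contrapositive. Assume $F=\{T^k f_1\}_{k=0}^{\infty}$ with $T$ linear on $\mathrm{span}\,F$, but suppose $F$ is linearly dependent. Let $n$ be the least index for which $\{f_1,\dots,f_{n+1}\}$ is dependent; then $\{f_1,\dots,f_n\}$ is independent and $f_{n+1}\in V_n:=\mathrm{span}\{f_1,\dots,f_n\}$. The key step is the invariance $T(V_n)\subseteq V_n$: since $Tf_j=f_{j+1}$ for $j\leq n$, the images $f_2,\dots,f_{n+1}$ all lie in $V_n$ (the last because $f_{n+1}\in V_n$). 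Applying $T$ repeatedly to $f_{n+1}\in V_n$ then yields $f_m\in V_n$ for every $m$, whence $\mathrm{span}\,F=V_n$ is finite-dimensional, contradicting the hypothesis. This forces $F$ to be linearly independent.

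The only genuine obstacle is this second implication, and specifically the observation that the orbit structure collapses the whole span the moment a single dependence relation appears: the invariance $T(V_n)\subseteq V_n$ propagates the first relation to all later terms. I would stress that infinite-dimensionality is indispensable rather than cosmetic — a finite-dimensional orbit, such as a $90^{\circ}$ rotation acting on a vector in $\mathbb{C}^2$, is linearly dependent yet perfectly representable as $\{T^k f_1\}$ — so the hypothesis is used in an essential way exactly at the step producing the contradiction. The first implication, by contrast, is essentially the remark that a linearly independent sequence is a basis of its span on which a linear map can be prescribed freely.
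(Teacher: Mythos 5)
Your proof is correct in both directions: the extension of $Tf_k:=f_{k+1}$ by linearity from the Hamel basis $F$ of $\mathrm{span}\,F$, and the contrapositive argument showing that a first dependence relation $f_{n+1}\in V_n:=\mathrm{span}\{f_1,\dots,f_n\}$ forces $T(V_n)\subseteq V_n$ and hence $\mathrm{span}\,F=V_n$, contradicting infinite-dimensionality. Note that the paper itself gives no proof of this proposition --- it is quoted verbatim from the cited reference of Christensen and Hasannasab --- so there is no in-paper argument to compare against; your reasoning is essentially the standard one from that source, and your closing remark (the rotation example in $\mathbb{C}^2$) correctly identifies where the infinite-dimensionality hypothesis is indispensable.
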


\section{\textbf{Main results}}\label{Sec4}
We begin with the following notations. Note that the sequence $F$
being represented by $T$ means that
\begin{eqnarray*}
F:=\{f_k\}_{k=1}^{\infty}=\{f_1,f_2,f_3 \cdots\}=\{f_1,Tf_1,T^2f_1
\cdots\}=\{T^kf_1\}_{k=0}^{\infty}.
\end{eqnarray*}
Therefore, for any $n\in \mathbb{N}$, we have
\begin{eqnarray*}
T_n(F):=T^n(F)=\{T^nf_k\}_{k=1}^{\infty}&=&\{T^nf_1,T^nf_2,\cdots\}\\
&=&\{f_{n+1},f_{n+2},\cdots\}=F\backslash \{f_1,f_2,\cdots, f_n\}.
\end{eqnarray*}

\bigskip

Throughout this section, for any $n\in \mathbb{N}$ we simply pick
$T_n:=T^n$. Suppose that $F$ is a sequence in $\mathcal{H}$ of the
form $\{T^kf_1\}_{k=0}^{\infty}$, for some operator $T$. For any
$n\in \mathbb{N}$, the associated synthesis, analysis, and frame
operators of $T_n(F)$ are given by the following proposition:
\begin{prop}\label{lemma.1}
The synthesis, the analysis and the frame operators for $T_n(F)$
are given by $T_nU_F, \,\ U^{\ast}_FT_n^{\ast}$ and
$T_nS_FT_n^{\ast}$, respectively.
\end{prop}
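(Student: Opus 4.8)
The plan is to verify each of the three identities by applying the defining formulas of the synthesis, analysis, and frame operators directly to the sequence $T_n(F)=\{T^nf_k\}_{k=1}^{\infty}$, and then exploiting that $T_n=T^n$ is bounded (hence continuous) with a well-behaved adjoint $T_n^{\ast}=(T^n)^{\ast}$. Since $T\in B(\mathcal{H})$, each of these manipulations is legitimate, and if $F$ is at least a Bessel sequence then so is $T_n(F)$ (with bound $B\|T^n\|^2$, as $\sum_k|\langle f,\;T^nf_k\rangle|^2=\sum_k|\langle T_n^{\ast}f,\;f_k\rangle|^2\leq B\|T^n\|^2\|f\|^2$), so by the Remark the relevant domains are full and the three identities hold as genuine operator equalities.

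First I would treat the synthesis operator. For a scalar sequence $\{c_k\}_{k=1}^{\infty}$ in the appropriate domain, the defining formula gives $U_{T_n(F)}(\{c_k\}_{k=1}^{\infty})=\sum_{k=1}^{\infty}c_kT^nf_k$. Because $T^n$ is bounded, hence continuous, it may be interchanged with the convergent sum, so $\sum_{k=1}^{\infty}c_kT^nf_k=T^n\sum_{k=1}^{\infty}c_kf_k=T_nU_F(\{c_k\}_{k=1}^{\infty})$, which yields $U_{T_n(F)}=T_nU_F$. Next I would handle the analysis operator: for $f\in\mathcal{H}$ the defining formula gives $U^{\ast}_{T_n(F)}f=\{\langle f,\;T^nf_k\rangle\}_{k=1}^{\infty}$, and moving $T^n$ across the inner product via its adjoint, $\langle f,\;T^nf_k\rangle=\langle T_n^{\ast}f,\;f_k\rangle$, so the sequence equals $U^{\ast}_F(T_n^{\ast}f)$, giving $U^{\ast}_{T_n(F)}=U^{\ast}_FT_n^{\ast}$.

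Finally, for the frame operator I would use the standard factorization $S_G=U_GU^{\ast}_G$, valid for any Bessel sequence $G$. Combining the first two identities gives
\begin{eqnarray*}
S_{T_n(F)}=U_{T_n(F)}U^{\ast}_{T_n(F)}=(T_nU_F)(U^{\ast}_FT_n^{\ast})=T_n(U_FU^{\ast}_F)T_n^{\ast}=T_nS_FT_n^{\ast}.
\end{eqnarray*}
Alternatively, one can compute $S_{T_n(F)}f=\sum_{k=1}^{\infty}\langle f,\;T^nf_k\rangle T^nf_k$ directly and repeat the two manipulations above: pass $T^n$ into the inner product through its adjoint, then pull $T^n$ out of the sum by continuity, arriving at $T^nS_F(T_n^{\ast}f)$.

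The computations themselves are routine, so the only genuine point requiring care is the bookkeeping of domains and convergence — specifically, justifying that $T^n$ may be interchanged with the infinite sums. This is precisely where boundedness of $T$ is used; once it is in hand, the three formulas follow immediately. I therefore expect no substantive obstacle beyond confirming that $T_n(F)$ inherits the Bessel property of $F$, which ensures the identities are equalities of everywhere-defined bounded operators rather than merely formal manipulations on dense domains.
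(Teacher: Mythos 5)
Your proposal is correct, and for the synthesis and analysis operators it is exactly the paper's argument: push the bounded operator $T_n$ through the convergent sum, and move $T_n$ across the inner product via its adjoint. The one genuine difference is the frame operator. The paper computes $S_{T_n(F)}f=\sum_{k=1}^{\infty}\langle f,\;T_nf_k\rangle T_nf_k=T_n\sum_{k=1}^{\infty}\langle T_n^{\ast}f,\;f_k\rangle f_k=T_nS_FT_n^{\ast}f$ directly (your stated ``alternative''), whereas your primary route is the factorization $S_{T_n(F)}=U_{T_n(F)}U^{\ast}_{T_n(F)}$ combined with the first two identities. That factorization buys brevity, but it is only legitimate when the operators involved are everywhere defined and bounded, i.e., under your standing Bessel assumption; note that the proposition as stated (and the surrounding section) does not assume $F$ is Bessel, and the paper introduces $U_F$, $U^{\ast}_F$, $S_F$ on their natural domains $D_1(F)$, $D_2(F)$, $D_3(F)$. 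In that generality the identity $S_G=U_GU^{\ast}_G$ can fail as an equality of domains --- this is precisely the content of part (iii) of the proposition that follows in the paper, where $dom(S_{T_n(F)})=dom(U^{\ast}_{T_n(F)})$ holds if and only if $ran(U^{\ast}_{T_n(F)})\subseteq dom(U_{T_n(F)})$. So your Bessel bound $B\|T^n\|^2$ genuinely strengthens the conclusion (the three identities become equalities of everywhere-defined bounded operators), but it is the paper's direct pointwise computation that handles the statement at its stated level of generality; since you include that computation as a fallback, your proof is complete either way.
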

\begin{proof}
For each $n\in \mathbb{N}$, we have

\begin{eqnarray*}
U_{T_n(F)}(\{c_k\}_{k=1}^{\infty})=\sum_{k=1}^{\infty}c_kT_nf_k=\sum_{k=1}^{\infty}T_n(c_kf_k)=T_nU_F(\{c_k\}_{k=1}^{\infty}),
\end{eqnarray*}
it follows that $U_{T_n(F)}=T_nU_F$.\\
For the analysis operator, we have

\begin{eqnarray*}
U^{\ast}_{T_n(F)}f=\{\langle f, \;T_nf_k
\rangle\}_{k=1}^{\infty}=\{\langle T_n^{\ast}f, \;f_k
\rangle\}_{k=1}^{\infty}=U^{\ast}_FT_n^{\ast}f,
\end{eqnarray*}
it follows that $U^{\ast}_{T_n(F)}=U^{\ast}_FT_n^{\ast}$.\\
Also, we have
\begin{eqnarray*}
S_{T_n(F)}f=\sum_{k=1}^{\infty}\langle f, \;T_nf_k \rangle
T_nf_k=T_n\sum_{k=1}^{\infty}\langle T_n^{\ast}f, \;f_k\rangle
f_k=T_nS_FT_n^{\ast}f,
\end{eqnarray*}
it follows that $S_{T_n(F)}=T_nS_FT_n^{\ast}$.
\end{proof}

\bigskip

It is well-known that $F$ is a Riesz basis for $\mathcal{H}$ if
and only if the analysis operator $U^{\ast}_F$ is bijective
(because by \cite[Theorem (5.4.1)]{Christensen.2016} and
\cite[Proposition (5.1.5)]{Grochening.2001}, $F$ is a Riesz basis
if and only if $F$ is a frame and $U^{\ast}_F$ is surjective and
also, by \cite[Corollary (5.5.3)]{Christensen.2016} $F$ is a frame
if and only if $U^{\ast}_F$ is an injective operator with closed
range).

\smallskip

We now have the following result:

\begin{prop}
Let $F$ be a Riesz basis for $\mathcal{H}$ of the form
$\{T^kf_1\}_{k=0}^{\infty}$, for some operator $T$. Then for any
$n \in \mathbb{N}$, $T_n(F)$ is a Riesz basis for $\mathcal{H}$ if
and only if $T_n$ is an invertible operator on $\mathcal{H}$.
\end{prop}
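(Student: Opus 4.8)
The plan is to use the characterization just stated in the excerpt: $F$ is a Riesz basis for $\mathcal{H}$ if and only if its analysis operator $U^{\ast}_F$ is bijective. By Proposition \ref{lemma.1}, the analysis operator of $T_n(F)$ is $U^{\ast}_{T_n(F)} = U^{\ast}_F T_n^{\ast}$, so the whole statement reduces to a question about when the composition $U^{\ast}_F T_n^{\ast}$ is bijective, given that $U^{\ast}_F$ is already bijective.

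First I would establish the easy direction. If $T_n$ is invertible on $\mathcal{H}$, then $T_n^{\ast}$ is invertible as well, and since $F$ is a Riesz basis, $U^{\ast}_F$ is bijective; hence the composition $U^{\ast}_{T_n(F)} = U^{\ast}_F T_n^{\ast}$ is a composition of two bijections and is therefore bijective. By the stated characterization, $T_n(F)$ is then a Riesz basis for $\mathcal{H}$.

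For the converse, suppose $T_n(F)$ is a Riesz basis, so $U^{\ast}_{T_n(F)} = U^{\ast}_F T_n^{\ast}$ is bijective. Because $U^{\ast}_F$ is itself bijective, I can left-multiply by its bounded inverse: $T_n^{\ast} = (U^{\ast}_F)^{-1} \bigl(U^{\ast}_F T_n^{\ast}\bigr)$ is then a composition of two bijective bounded operators, hence bijective. A bijective bounded operator on $\mathcal{H}$ is invertible (bounded inverse by the open mapping theorem), so $T_n^{\ast}$ is invertible, and therefore $T_n$ is invertible as well. Since $T_n = T^n$, this is the desired conclusion.

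The main subtlety to handle carefully is the distinction between \emph{bijective} and \emph{boundedly invertible}: the cited characterization phrases things in terms of bijectivity of $U^{\ast}_F$, so I need the open mapping theorem to pass from bijectivity to invertibility of $T_n^{\ast}$ (and hence of $T_n$), and I must keep track of the fact that all operators in sight are bounded so that this theorem applies. A secondary point worth a sentence is that invertibility of $T_n^{\ast}$ is equivalent to invertibility of $T_n$, since $(T_n^{\ast})^{-1} = (T_n^{-1})^{\ast}$; this is routine but should be stated so the adjoint in the analysis operator does not cause a gap. Everything else is formal manipulation of compositions, with no genuine analytic obstacle.
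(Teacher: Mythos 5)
Your proposal is correct and follows essentially the same route as the paper: both directions reduce, via the formula $U^{\ast}_{T_n(F)} = U^{\ast}_F T_n^{\ast}$ from Proposition \ref{lemma.1} and the bijectivity characterization of Riesz bases, to composing or cancelling the invertible operator $U^{\ast}_F$. Your extra remarks (open mapping theorem to pass from bijective to boundedly invertible, and invertibility of $T_n^{\ast}$ versus $T_n$) merely make explicit what the paper's proof leaves implicit.
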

\begin{proof}
Since $F$ is a Riesz basis for $\mathcal{H}$, then the analysis
operator $U^{\ast}_F$ is invertible. For any $n \in \mathbb{N}$,
if $T_n$ is an invertible operator on $\mathcal{H}$ then
$U^{\ast}_FT^{\ast}_n$ (the analysis operator of $T_n(F)$) is
invertible. Therefore, for any $n \in \mathbb{N}$, $T_n(F)$ is a
Riesz basis for $\mathcal{H}$. Conversely, for any $n \in
\mathbb{N}$ if $T_n(F)$ is a Riesz basis for $\mathcal{H}$, then
the analysis operator $U^{\ast}_FT^{\ast}_n$ is invertible. Also,
since the analysis operator $U^{\ast}_F$ is an invertible operator
on $\mathcal{H}$, it follows that $T^{\ast}_n$ and then $T_n$ is
an invertible operator on $\mathcal{H}$.
\end{proof}

\bigskip

We give some relations associated to the domain and range of the
synthesis, analysis and frame operators for $T_n(F)$.
\begin{prop}
For any $n\in \mathbb{N}$, the following statements hold:
\begin{itemize}
\item [(i)] $dom (T_nS_FT_n^{\ast})=dom (S_{T_n(F)})\subseteq
dom(U^{\ast}_{T_n(F)})=dom(U_F^{\ast}T_n^{\ast})$, \item
[(ii)]$ran(T_nS_FT_n^{\ast})=ran (S_{T_n(F)})\subseteq
ran(U_{T_n(F)})=ran(T_nU_F)$, \item[(iii)] $dom (S_{T_n(F)})= dom
(U^{\ast}_{T_n(F)})$ \mbox{iff} $ran (U^{\ast}_{T_n(F)}) \subseteq
dom (U_{T_n(F)})$.
\end{itemize}
\end{prop}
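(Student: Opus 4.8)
The plan is to first note that every \emph{outer} equality in (i) and (ii) is automatic: by Proposition \ref{lemma.1} the operators $S_{T_n(F)}$ and $T_nS_FT_n^{\ast}$ are literally the same operator, as are $U^{\ast}_{T_n(F)}$ and $U^{\ast}_FT_n^{\ast}$, and $U_{T_n(F)}$ and $T_nU_F$; equal operators share their domain and range, so those equalities need no argument. Writing $g_k:=T_nf_k$ for the members of $T_n(F)$, the genuine content reduces to the middle inclusion $D_3(T_n(F))\subseteq D_2(T_n(F))$ of (i), the middle inclusion $ran(S_{T_n(F)})\subseteq ran(U_{T_n(F)})$ of (ii), and the equivalence (iii).

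For (i) I would argue directly from the defining series. Fix $f\in D_3(T_n(F))$ and put $c_k:=\langle f,\;g_k\rangle$, so that by definition $\sum_{k}c_kg_k$ converges to some $h\in\mathcal{H}$. Pairing the partial sums with $f$ and using continuity of the inner product gives
\[
\langle h,\;f\rangle=\lim_{N\to\infty}\Big\langle \sum_{k=1}^{N}c_kg_k,\;f\Big\rangle=\lim_{N\to\infty}\sum_{k=1}^{N}c_k\overline{\langle f,\;g_k\rangle}=\lim_{N\to\infty}\sum_{k=1}^{N}|c_k|^2.
\]
Since the partial sums on the right are nondecreasing and converge, $\sum_k|c_k|^2=\langle h,\;f\rangle<\infty$, i.e. $\{\langle f,\;g_k\rangle\}_k\in\ell^2(\mathbb{N})$, which is exactly $f\in D_2(T_n(F))$. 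This is the one step that is not pure bookkeeping, and I expect it to be the main obstacle: for a general sequence the convergence of $\sum_kc_kg_k$ does \emph{not} force $\{c_k\}\in\ell^2$, and the crux is that here the coefficients are the analysis coefficients $c_k=\langle f,\;g_k\rangle$, so testing against $f$ recovers $\sum_k|c_k|^2$.

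For (ii) I would use the factorization $S_{T_n(F)}=U_{T_n(F)}U^{\ast}_{T_n(F)}$. Concretely, for $f\in D_3(T_n(F))$ one has $S_{T_n(F)}f=\sum_kc_kg_k=U_{T_n(F)}\big(\{c_k\}_k\big)$ with $c_k=\langle f,\;g_k\rangle$; part (i) guarantees $\{c_k\}_k\in\ell^2(\mathbb{N})$, and the convergence of the series places $\{c_k\}_k$ in $D_1(T_n(F))=dom(U_{T_n(F)})$, so the factorization is legitimate. Hence every value of $S_{T_n(F)}$ lies in $ran(U_{T_n(F)})$, which is the desired inclusion.

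Finally, (iii) is a definition chase resting on the inclusion $D_3(T_n(F))\subseteq D_2(T_n(F))$ already secured in (i); only the reverse inclusion is ever at issue. If $ran(U^{\ast}_{T_n(F)})\subseteq dom(U_{T_n(F)})$, then for $f\in D_2(T_n(F))$ the sequence $U^{\ast}_{T_n(F)}f=\{\langle f,\;g_k\rangle\}_k$ lies in $D_1(T_n(F))$, so $\sum_kc_kg_k$ converges and $f\in D_3(T_n(F))$, forcing the domains to coincide. Conversely, if the domains coincide, then any $\{\langle f,\;g_k\rangle\}_k\in ran(U^{\ast}_{T_n(F)})$ comes from some $f\in D_2(T_n(F))=D_3(T_n(F))$, whence $\sum_kc_kg_k$ converges and $\{\langle f,\;g_k\rangle\}_k\in D_1(T_n(F))=dom(U_{T_n(F)})$; as this element was arbitrary, $ran(U^{\ast}_{T_n(F)})\subseteq dom(U_{T_n(F)})$. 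Beyond keeping the three domain conditions straight, I expect no difficulty here, the only substantive ingredient being the inner-product argument of (i).
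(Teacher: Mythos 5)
Your proof is correct and follows essentially the same route as the paper's: part (i) by pairing the convergent series $\sum_k \langle f,\;T_nf_k\rangle T_nf_k$ with $f$ to recover convergence of $\sum_k|\langle f,\;T_nf_k\rangle|^2$, part (ii) by the factorization $S_{T_n(F)}=U_{T_n(F)}U^{\ast}_{T_n(F)}$, and part (iii) by the same two-direction definition chase. The only difference is cosmetic: you make explicit the domain justifications (e.g.\ that the coefficient sequence lies in $dom(U_{T_n(F)})$) that the paper leaves implicit.
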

\begin{proof}
For any $n\in \mathbb{N}$, it is known that
\begin{eqnarray*}
dom(T_nS_FT_n^{\ast})=dom(S_{T_n(F)})=\{f \in \mathcal{H}: \,\
\sum_{k=1}^{\infty}\langle f, \;T_nf_k\rangle T_nf_k \,\
\mbox{converges in}\,\ \mathcal{H} \},
\end{eqnarray*}
\begin{eqnarray*}
dom(U_F^{\ast}T_n^{\ast})=dom(U^{\ast}_{T_n(F)})=\{f \in
\mathcal{H}: \,\ \{\langle f, \;T_nf_k\rangle\}_{k=1}^{\infty}\in
\ell^2(\mathbb{N}) \},
\end{eqnarray*}
\begin{eqnarray*}
 dom(T_nU_F)&=&dom(U_{T_n(F)})\\
&=&\{ (c_k)_{k=1}^{\infty}\in \ell^2(\mathbb{N}): \,\
\sum_{k=1}^{\infty}c_kT_nf_k=\sum_{k=1}^{\infty}T_n(c_kf_k) \,\
\mbox{converges in}\,\ \mathcal{H} \}.
\end{eqnarray*}

For (i), let $f \in dom (S_{T_n(F)})$. It follows that
\begin{eqnarray*}
\sum_{k=1}^{\infty}\langle f, \;T_nf_k\rangle T_nf_k \,\
\mbox{converges in}\,\ \mathcal{H}.
\end{eqnarray*}
Then we obtain that
\begin{eqnarray*}
\left\langle \sum_{k=1}^{N}\langle f, \;T_nf_k \rangle T_nf_k, \;f
\right\rangle \longrightarrow \langle S_{T_n(F)}f, \;f \rangle,\,\
\mbox{as}\,\ N \longrightarrow \infty,
\end{eqnarray*}
which implies that
\begin{eqnarray*}
 \sum_{k=1}^{N}|\langle f, \;T_nf_k \rangle |^2=\sum_{k=1}^{N}|\langle T_n^{\ast}f, \;f_k \rangle |^2 \,\ \mbox{converges
 as}\,\ N\longrightarrow \infty.
\end{eqnarray*}
Therefore, $f \in dom (U^{\ast}_{T_n(F)})=dom
(U_F^{\ast}T_n^{\ast})$.\\

(ii) follows from the fact that
$$S_{T_n(F)}=T_nS_FT_n^{\ast}=T_nU_FU_F^{\ast}T_n^{\ast}=\underbrace{T_nU_F}U^{\ast}_{T_n(F)}.$$
\smallskip

For (iii), let $dom (S_{T_n(F)})= dom (U^{\ast}_{T_n(F)})$ and
take $U^{\ast}_{T_n(F)}f \in ran (U^{\ast}_{T_n(F)})$. Then $f \in
dom (S_{T_n(F)})$ which implies that $U^{\ast}_{T_n(F)}f \in dom
(U_{T_n(F)})$. Now if $ran (U^{\ast}_{T_n(F)}) \subseteq dom
(U_{T_n(F)})$, it is clear that $dom (U^{\ast}_{T_n(F)}) \subseteq
dom (S_{T_n(F)})$ and the other inclusion is given in (i).
Therefore,\\ $dom (S_{T_n(F)})=dom (U^{\ast}_{T_n(F)})$.
\end{proof}


\bigskip

For a given operator $T\in B(\mathcal{H})$, let
\begin{eqnarray*}
\mathcal{V}(T):=\left\{ \phi \in \mathcal{H}: \{T^k\phi \}_{k\in
\mathbb{N}_0}\,\ \mbox{is a frame for}\,\ \mathcal{H} \right\}.
\end{eqnarray*}
\smallskip Also, let
\begin{eqnarray*}
\mathcal{E}(\mathcal{H}):=\left\{T\in B(\mathcal{H}):
\{T^k\phi\}_{k\in \mathbb{N}_0}\,\ \mbox{is a frame for}\,\
\mathcal{H},\,\ \mbox{for some}\,\ \phi \in \mathcal{H} \right\}.
\end{eqnarray*}
\smallskip

Let $T \in B(\mathcal{H})$ be an operator for which there exists
some $f \in \mathcal{H}$ such that $\{T^kf\}_{k=0}^{\infty}$ is a
frame for $\mathcal{H}$. A natural question to ask is whether
there exist other vectors $\phi \in \mathcal{H}$ for which
$\{T^k\phi\}_{k=0}^{\infty}$ is also a frame for $\mathcal{H}$. In
the following theorem we consider a viewpoint of this query; we
discussed the size of the set of vectors $\phi \in \mathcal{H}$
for which $\{T^n\phi\}_{k=0}^{\infty}$ is a frame for
$\mathcal{H}$.

\bigskip

\begin{prop}
\label{thm.1} Let $T \in B(\mathcal{H})$. Assume that $T$ is
invertible. Then
\begin{eqnarray*}
\mathcal{V}(T)=\bigcap_{f \in \mathcal{V}(T), k\in
\mathbb{N}}\mathbf{B}(f,k),
\end{eqnarray*}
where,
\begin{eqnarray*}
\mathbf{B}(f,k)=\bigcup_{n \in \mathbb{N}_0} \{\phi \in
\mathcal{H}: \|T^n\phi-f\|<\frac{1}{k}\}.
\end{eqnarray*}

\end{prop}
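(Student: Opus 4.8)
The plan is to recast the right-hand side as a statement about orbit closures and then split the asserted equality into two inclusions. First I would unwind the sets $\mathbf{B}(f,k)$. For a fixed $f$ one has $\phi\in\bigcap_{k\in\mathbb{N}}\mathbf{B}(f,k)$ precisely when for every $k$ there is some $n\in\mathbb{N}_0$ with $\|T^n\phi-f\|<1/k$, i.e. precisely when $f\in\overline{\{T^n\phi:n\in\mathbb{N}_0\}}$. Intersecting over $f\in\mathcal{V}(T)$ then gives
\[
\bigcap_{f\in\mathcal{V}(T),\,k\in\mathbb{N}}\mathbf{B}(f,k)=\Big\{\phi\in\mathcal{H}:\ \mathcal{V}(T)\subseteq\overline{\{T^n\phi:n\in\mathbb{N}_0\}}\Big\}.
\]
Thus the theorem is equivalent to the assertion that, for invertible $T$, a vector $\phi$ belongs to $\mathcal{V}(T)$ if and only if the closure of its forward orbit contains all of $\mathcal{V}(T)$. (Note the implicit hypothesis $\mathcal{V}(T)\neq\emptyset$: otherwise the empty intersection is all of $\mathcal{H}$ and the equality fails, so I would state the result under that standing assumption.)

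Next I would record two structural facts that make this orbit picture tractable. Since $T$ is invertible, each power $T^n$ lies in the commutant of $T$ and is invertible, so the characterization of $\mathcal{V}(T)$ recalled from \cite{Christensen.2019} — obtained by applying invertible elements of the commutant to one frame generator — immediately yields $\{T^n\phi:n\in\mathbb{N}_0\}\subseteq\mathcal{V}(T)$ whenever $\phi\in\mathcal{V}(T)$; in particular $\mathcal{V}(T)$ is invariant under $T$. Moreover, for an arbitrary $\phi$ the set $\overline{\{T^n\phi:n\in\mathbb{N}_0\}}$ is forward $T$-invariant, since continuity gives $T\big(\overline{\{T^n\phi\}}\big)\subseteq\overline{\{T^{n+1}\phi\}}\subseteq\overline{\{T^n\phi\}}$. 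Hence if the orbit closure of $\phi$ contains a single $f_0\in\mathcal{V}(T)$, it contains the entire forward orbit of $f_0$, and so $\overline{\{T^n f_0:n\in\mathbb{N}_0\}}$.

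With these in hand the forward inclusion $\mathcal{V}(T)\subseteq\bigcap\mathbf{B}(f,k)$ amounts to showing that for $\phi\in\mathcal{V}(T)$ every $f\in\mathcal{V}(T)$ is a norm-limit of iterates $T^n\phi$. Writing $f=V\phi$ with $V$ an invertible element of the commutant (again via \cite{Christensen.2019}), this is the statement that $V\phi$ can be approximated by $T^n\phi$, i.e. that the semigroup $\{T^n\}_{n\in\mathbb{N}_0}$ is dense, at the vector $\phi$, in the invertible part of the commutant. I would try to establish this using that the frame hypothesis forces $T^n\phi\to0$ weakly together with the completeness of $\{T^n\phi\}$, and the $T$-invariance just noted; this is already a nontrivial density step. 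For the reverse inclusion, given $\phi$ with $\mathcal{V}(T)\subseteq\overline{\{T^n\phi\}}$, I would fix any $f_0\in\mathcal{V}(T)$, extract a subsequence $T^{n_j}\phi\to f_0$, and attempt to transport the frame inequalities $A\|g\|^2\le\sum_n|\langle g,T^n f_0\rangle|^2\le B\|g\|^2$ back to $\phi$.

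The main obstacle I anticipate is precisely this last transfer: recovering the \emph{quantitative} frame bounds for $\{T^n\phi\}_{n\in\mathbb{N}_0}$ from the purely topological containment $\mathcal{V}(T)\subseteq\overline{\{T^n\phi\}}$. Neither the upper (Bessel) bound nor the lower bound for $\phi$'s own orbit is handed to us by the approximate data $T^{n_j}\phi\to f_0$, because the frame sums range over \emph{all} iterates while the hypothesis controls only a subsequence's limit. This is the step that must genuinely use invertibility of $T$ rather than mere boundedness — for instance to move the bounds for $f_0$ along $T^{n_j}$ and $T^{-n_j}$ — and establishing the uniform lower frame bound for $\phi$ from such data is where I expect the real work, and the real risk, to lie.
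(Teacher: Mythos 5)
Your proposal is not a proof: both inclusions are explicitly deferred (``I would try to establish this\dots'', ``this is where I expect the real work\dots to lie''), so what you actually have is a reformulation plus a plan. The deeper problem is that the forward step of the plan cannot be carried out, and your own two observations already show why. You correctly rewrite the right-hand side as $\bigl\{\phi\in\mathcal{H}:\ \mathcal{V}(T)\subseteq\overline{\{T^n\phi:n\in\mathbb{N}_0\}}\bigr\}$, and you correctly note that for $\phi\in\mathcal{V}(T)$ the Bessel property forces $T^n\phi\to 0$ weakly. These two facts together refute the proposition whenever $\mathcal{V}(T)\neq\emptyset$: since the whole sequence $T^n\phi$ is weakly null, the only possible norm-limit point of the orbit is $0$, so $\overline{\{T^n\phi\}}\subseteq\{T^n\phi:n\in\mathbb{N}_0\}\cup\{0\}$ is countable; on the other hand $\mathcal{V}(T)$ contains $c\phi$ for every scalar $c\neq 0$ (scaling a frame by a nonzero scalar preserves the frame property), hence is uncountable. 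Therefore $\mathcal{V}(T)\not\subseteq\overline{\{T^n\phi\}}$, i.e.\ $\phi\notin\bigcap_{f,k}\mathbf{B}(f,k)$, for \emph{every} $\phi\in\mathcal{V}(T)$. Concretely, $f=2\phi$ lies in $\mathcal{V}(T)$ but not in the orbit closure: it is nonzero, so it is not the weak limit point; $2\phi=\phi$ would force $\phi=0$; and $2\phi=T^m\phi$ for some $m\geq 1$ would give $\|T^{jm}\phi\|=2^j\|\phi\|\to\infty$, contradicting the bound $\|T^k\phi\|^2\leq B$ that every Bessel sequence satisfies. So the claimed equality fails for every invertible $T$ with $\mathcal{V}(T)\neq\emptyset$, and when $\mathcal{V}(T)=\emptyset$ it fails too, by the empty-intersection convention you flagged.

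In other words, the ``nontrivial density step'' you identified in the forward inclusion is not merely hard --- it is false --- and no amount of work on the reverse inclusion (where you located the ``real risk'') can rescue the statement. For comparison, the paper's own proof slides over exactly this point with a non sequitur: it takes $n=0$ and asserts $\phi=T^0\phi\in B_{f,k}$ ``because'' $\phi\in\mathcal{V}(T)$ and $B_{f,k}$ is a ball centered at a point of $\mathcal{V}(T)$, which does not follow, since two points of $\mathcal{V}(T)$ need not be within $1/k$ of each other. (The paper's reverse inclusion is also defective: its perturbation constant $\mu=\frac{1}{k}\bigl(\sum_{i=0}^n\|T\|^{2i}\bigr)^{1/2}$ depends on $n$ and is unbounded in $n$, because every $T\in\mathcal{E}(\mathcal{H})$ has $\|T\|\geq 1$; so no fixed $k$ makes $\mu<\sqrt{A}$ uniformly, as the Casazza--Christensen perturbation theorem requires.) Your reduction, pushed one step further, does not fill a gap in the proof; it shows the proposition itself must be reformulated.
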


\begin{proof}
For any $f \in \mathcal{V}(T)$, it is obvious that $T \in
\mathcal{E}(\mathcal{H})$. Now let $B_{f,k}$'s are open balls
centered at $f\in \mathcal{V}(T)$ and with radius $\frac{1}{k}, (k
\in \mathbb{N})$. Then by continuity, for any $n \in \mathbb{N}_0$
$$(T^n)^{-1}B_{f,k}=\{\phi \in \mathcal{H}:
\|T^n\phi-f\|<\frac{1}{k}\},$$ is open in $\mathcal{H}$.
Therefore, for any $k \in \mathbb{N}$
\begin{eqnarray}\label{eq.200}
\mathbf{B}(f,k):=\bigcup_{n \in \mathbb{N}_0}(T^n)^{-1}B_{f,k},
\end{eqnarray}
 is open in $\mathcal{H}$.\\

\textbf{Claim}:
$$\mathcal{V}(T)=\bigcap_{f\in \mathcal{V}(T),\,\
k\in \mathbb{N}}\mathbf{B}(f,k).$$

Let $\phi \in \mathcal{V}(T)$. Then for any $f \in \mathcal{V}(T)$
and any $k\in \mathbb{N}$, there exists $n \in \mathbb{N}_0$ such
that $T^n\phi \in B_{f,k}$, (because for example, if $n=0$ then
$T^n\phi=\phi \in \mathcal{V}(T)$ and also we know that
$B_{f,k}$'s are open balls centered at $f\in \mathcal{V}(T)$ and
with radius $\frac{1}{k}, (k \in \mathbb{N})$ and hence,
$T^n\phi=\phi \in B_{f,k}$), it follows that $\phi \in
(T^n)^{-1}B_{f,k}$. Therefore,
\begin{eqnarray*}
\phi \in \bigcap_{k \in \mathbb{N}}\bigcup_{n \in \mathbb{N}_0
}(T^{n})^{-1}B_{f,k}.
\end{eqnarray*}
 It follows that, $\phi \in \bigcap_{k \in
\mathbb{N}}\mathbf{B}(f,k)$, for any $f \in \mathcal{V}(T)$.
Conversely, let $\phi \in \bigcap_{f\in \mathcal{V}(T), k \in
\mathbb{N}}\mathbf{B}(f,k)$, then for any $f \in \mathcal{V}(T)$
and any $k \in \mathbb{N}$,
\begin{eqnarray}\label{eqn.299}
\phi \in \mathbf{B}(f,k).
\end{eqnarray}
Hence, for any $f \in \mathcal{V}(T)$ and any $k \in \mathbb{N}$,
from (\ref{eq.200}) and (\ref{eqn.299}) we conclude that there
exists $m \in \mathbb{N}_0$ such that $\phi \in
(T^m)^{-1}B_{f,k}$, it follows that
\begin{eqnarray}\label{eqn.300}
T^m\phi \in B_{f,k}.
\end{eqnarray}
We now need the following fact in
the sequel:\\

\textit{Fact.} For any $f\in \mathcal{V}(T)$ and for $k \in
\mathbb{N}$ sufficiently large, $B_{f,k} \subset \mathcal{V}(T)$.

{\textit{proof of the fact.}} We have to show that for any $f\in
\mathcal{V}(T)$, if $\|f-\phi\|$ is small enough then $\phi \in
\mathcal{V}(T)$, i.e., $\{\phi_i\}_{i\in \mathbb{N}_0}:=\{T^i\phi
\}_{i\in \mathbb{N}_0}$ is a frame for $\mathcal{H}$. For this
purpose, it is enough that $\{\phi_i\}_{i\in \mathbb{N}_0}$
satisfy the assumptions of Theorem (2) in \cite{Casazza.1997}. Let
$A$ be a lower frame bound for the frame $\{f_i\}_{i\in
\mathbb{N}_0}:=\{T^if \}_{i\in \mathbb{N}_0}$. We have
\begin{eqnarray*}
\|\sum_{i=0}^n c_i(f_i-\phi_i)\|&=&\sup_{\|g\|=1}|\langle
\sum_{i=0}^n c_i(f_i-\phi_i), \;g\rangle|\\
&\leq& \sup_{\|g\|=1}\sum_{i=0}^n|c_i\langle(f_i-\phi_i),\;g\rangle|\\
\mbox{(Cauchy-Schwarz inequality)}&\leq&
(\sum_{i=0}^n|c_i|^2)^{1/2}\sup_{\|g\|=1} (\sum_{i=0}^n |\langle
f_i-\phi_i,\;g\rangle|^2)^{1/2}.
\end{eqnarray*}
On the other hand,
\begin{eqnarray*}
\sup_{\|g\|=1} (\sum_{i=0}^n |\langle
f_i-\phi_i,\;g\rangle|^2)^{1/2}&=&\sup_{\|g\|=1} (\sum_{i=0}^n
|\langle T^i(f-\phi),\;g\rangle|^2)^{1/2}\\
&\leq& \sup_{\|g\|=1} (\sum_{i=0}^n
\|T\|^{2i}\|f-\phi\|^2\|g\|^2)^{1/2}\\
&\leq& \frac{1}{k}(\sum_{i=0}^n\|T\|^{2i})^{1/2}.
\end{eqnarray*}
Therefore, we have
\begin{eqnarray*}
\|\sum_{i=0}^n c_i(f_i-\phi_i)\|\leq
\mu(\sum_{i=0}^n|c_i|^2)^{1/2},
\end{eqnarray*}
where $\mu:=\frac{1}{k}(\sum_{i=0}^n\|T\|^{2i})^{1/2}$. Choose $k
\in \mathbb{N}$ sufficiently large such that $\mu<\sqrt{A}$.
Therefore, $\{\phi_i\}_{i\in \mathbb{N}_0}:=\{T^i\phi \}_{i\in
\mathbb{N}_0}$ satisfy in \cite[Theorem (2)]{Casazza.1997} with
$\lambda_1=\lambda_2=0$.

Hence by (\ref{eqn.300}) and the previous Fact we have $T^m\phi
\in \mathcal{V}(T)$, it follows that $\{T^n(T^m\phi)\}_{n \in
\mathbb{N}_0}$ is a frame for $\mathcal{H}$. By adding of a finite
elements of points $\{\phi, T\phi,\cdots T^{m-1}\phi\}$ to the
system $\{T^n(T^m\phi)\}_{n=0}^{\infty}$, yields that the system
$\{T^n \phi\}_{n=0}^{\infty}$ is a frame for $\mathcal{H}$, that
is, $\phi \in \mathcal{V}(T)$ and this completes the proof.
\end{proof}

\bigskip

It is well-known that for any $T\in B(\mathcal{H})$, $\sigma(T)$
is included in the ball $B(0, \|T\|)$. Therefore, $r(T)\leq \|T\|$
for any $T\in B(\mathcal{H})$. In \cite{Christensen.2018}, it is
proved that $\mathcal{E}(\mathcal{H})$ does not form an open set
in $B(\mathcal{H})$. The next proposition yields that the set of
invertible elements in $\mathcal{E}(\mathcal{H})$ is relatively
open in $\mathcal{E}(\mathcal{H})$.
\bigskip
\begin{prop}
The set of invertible operators in $\mathcal{E}(\mathcal{H})$ is
relatively open in $\mathcal{E}(\mathcal{H})$.
\end{prop}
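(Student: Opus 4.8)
The plan is to reduce the statement to the classical fact that the set of invertible operators is open in $B(\mathcal{H})$, and then to read off relative openness directly from the definition of the subspace topology. Write $\mathcal{G}:=\{S\in B(\mathcal{H}): S \ \text{is invertible}\}$ for the group of invertible bounded operators. The set of invertible operators in $\mathcal{E}(\mathcal{H})$ is, by definition, exactly $\mathcal{E}(\mathcal{H})\cap\mathcal{G}$. Since a subset of $\mathcal{E}(\mathcal{H})$ of the form $\mathcal{E}(\mathcal{H})\cap O$, with $O$ open in $B(\mathcal{H})$, is precisely what it means to be relatively open in $\mathcal{E}(\mathcal{H})$, it suffices to prove that $\mathcal{G}$ is open in $B(\mathcal{H})$.

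To establish openness of $\mathcal{G}$, I would fix an invertible $T$ and exhibit an explicit radius. For any $S\in B(\mathcal{H})$ with $\|S-T\|<\|T^{-1}\|^{-1}$, I factor $S=T\bigl(I-T^{-1}(T-S)\bigr)$ and observe that $\|T^{-1}(T-S)\|\le\|T^{-1}\|\,\|T-S\|<1$. The Neumann series $\sum_{j=0}^{\infty}\bigl(T^{-1}(T-S)\bigr)^{j}$ then converges in $B(\mathcal{H})$ and inverts $I-T^{-1}(T-S)$, so $S$ is a product of two invertible operators and is therefore invertible. Hence the open ball of radius $\|T^{-1}\|^{-1}$ centered at $T$ is contained in $\mathcal{G}$, which shows $\mathcal{G}$ is open.

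Finally I combine the two observations: taking $O=\mathcal{G}$, which is open in $B(\mathcal{H})$, the set of invertible elements of $\mathcal{E}(\mathcal{H})$ equals $\mathcal{E}(\mathcal{H})\cap O$ and is therefore relatively open in $\mathcal{E}(\mathcal{H})$. There is no genuine analytic obstacle here; the only substantive content is the openness of the invertible group, which is standard. The one point deserving careful phrasing is the logical step that identifies ``the invertible elements of $\mathcal{E}(\mathcal{H})$'' with the intersection $\mathcal{E}(\mathcal{H})\cap\mathcal{G}$ and invokes the definition of the relative topology — that is exactly where the qualifier \emph{relatively} does all the work, and it is what keeps us from having to prove the stronger (and generally false) assertion that small perturbations of an invertible member of $\mathcal{E}(\mathcal{H})$ must themselves remain in $\mathcal{E}(\mathcal{H})$.
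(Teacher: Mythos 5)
Your proposal is correct and follows essentially the same argument as the paper: both hinge on the factorization $U=T\bigl(I-T^{-1}(T-U)\bigr)$ together with the estimate $\|T^{-1}(T-U)\|\le\|T^{-1}\|\,\|T-U\|<1$ to conclude invertibility of the perturbed operator (the paper phrases this via the spectral radius, you via the Neumann series --- the same classical fact). If anything, your organization --- first proving the invertible group is open in $B(\mathcal{H})$, then intersecting with $\mathcal{E}(\mathcal{H})$ and invoking the definition of the subspace topology --- handles the word \emph{relatively} more cleanly than the paper, whose closing sentence loosely speaks of an open ball ``made of bounded invertible operators in $\mathcal{E}(\mathcal{H})$,'' whereas what is actually proved (and all that is needed, since $\mathcal{E}(\mathcal{H})$ itself is not open) is that every member of $\mathcal{E}(\mathcal{H})$ lying in that ball is invertible.
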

\begin{proof}
Suppose that $\widetilde{\mathcal{E}}(\mathcal{H})$ is the set of
invertible operators in $\mathcal{\mathcal{E}}(\mathcal{H})$ and
pick $T \in \widetilde{\mathcal{E}}(\mathcal{H})$. For each $U \in
\mathcal{E}(\mathcal{H})$ we have

\begin{eqnarray} \label{eqn.22}
U=T(I-T^{-1}(T-U)).
\end{eqnarray}

Assume that $\mathcal{N}_r(T)$ is an open neighborhood centered at
$T$ and with radius $r:=\|T^{-1}\|^{-1}$, i.e., for every $U \in
\mathcal{N}_r(T)$ we have $$\|T-U\| < \|T^{-1}\|^{-1}.$$ Also, we
have
\begin{eqnarray*} \label{eqn.23}
r(T^{-1}(T-U))&\leq& \|T^{-1}(T-U)\|\\
&\leq& \|T^{-1}\|\|(T-U)\|\\
&<& \|T^{-1}\|\|T^{-1}\|^{-1}\\
 &=&1.
\end{eqnarray*}
Hence the value $1$ is not in $\sigma(T^{-1}(T-U))$, that is, the
operator $I-T^{-1}(T-U)$ is invertible and therefore, by
(\ref{eqn.22}) $U$ is invertible and belongs to
$\widetilde{E}(\mathcal{H})$. We have proved that there exists an
open ball around $T$ made of bounded invertible operators in
$\mathcal{E}(\mathcal{H})$, that is,
$\widetilde{\mathcal{E}}(\mathcal{H})$ is open.
\end{proof}


\begin{rem} It is clear that $\mathcal{E}(\mathcal{H})$ cannot be dense
in $B(\mathcal{H})$ with respect to the norm topology. Indeed, by
Prop. 2.2. in \cite{Christensen.2017} every operator $T \in
\mathcal{E}(\mathcal{H})$ has norm greater or equal than $1$. So
the norm topology is not always the most natural topology on
$B(\mathcal{H})$. It is often more useful to consider the weakest
topology on $B(\mathcal{H})$, so-called the strong operator
topology, it is defined by the family of seminorms $\{p_h: h \in
\mathcal{H}\}$, where $p_h(T)=\|Th\|$.
\end{rem}

We conclude this note by raising the following questions:

\bigskip

\textbf{Q1}. For $T \in B(\mathcal{H})$, pick $c=\|T\|+\alpha,\,\
(0<\alpha<1)$ and replace the norm topology by the strong operator
topology. What can we say for the size of the set of all operators
in $\mathcal{E}(\mathcal{H})$ with the norm of at most $c$?\\

\textbf{Q2}. Do there exist $T \in \mathcal{E}(\mathcal{H})$ such
that
$T^{-1}\in \mathcal{E}(\mathcal{H})$?\\

\textbf{Q3}. Let $T \in B(\mathcal{H})$ is invertible and that
there exist a dense subset $D\subset \mathcal{H}$ such that $T^k$
and $(T^{-1})^k$ tends to zero as $k\rightarrow \infty$ on $D$. Do
there exist $\varphi \in \mathcal{H}$ such that
$\{T^k\phi\}_{k=0}^{\infty}$ and
$\{(T^{-1})^k\phi\}_{k=0}^{\infty}$ are frames in $\mathcal{H}$?

\bigskip


\bigskip








\bibliographystyle{plain}

\begin{thebibliography}{25}
\bibitem{Aldroubi1.2016}
A. Aldroubi and A. Petrosyan, \textit{Dynamical sampling and
systems from iterative actions of operators} 2016.

\bibitem{Aldroubi2.2017}
A. Aldroubi, C. Cabrelli, U. Molter and S. Tang, \textit{Dynamical
sampling}, Appl. Harmon. Anal. Appl. 42 (3), 378-401 (2017).

\bibitem{Blan.2001}
R. Balan, P.G.Casazza, C. Hill and Z. Landau, Deficits and
excesses of frames, Advances in computational mathematics, 18,
93-116 (2003).

\bibitem{Bayart}
F. Bayart, E. Matheron, \textit{Dynamics of linear operators},
Cambridge University Press (2009).

\bibitem{Casazza.1997}
P. Casazza, O. Cheristensen, \textit{Perturbation of Operators and
Applications to Frame Theory}, The Journal of Fourier Analysis and
Applications, 3 (5), 543-556 (1997).

\bibitem{Casazza.2000}
P. Casazza, \textit{Modern tools for Weyl-Heisenberg (Gabor) frame
theory}, Advances in Imageing and Electron Physics, 115, 1-127
(2000).

\bibitem{Christensen.2016}
O. Christensen, \textit{An Introduction to Frame and Riesz Bases},
Birkh\"{a}user, Bosten (2003).

\bibitem{Christensen.2017}
O. Christensen, M. Hasannasab, \textit{Operator Representations of
Frames: Boundedness, Duality and stability}, Integr. Equ. Oper.
Theory 88, 483-499 (2017).

\bibitem{Christensen.2018}
O. Christensen, M. Hasannasab, \textit{Dynamical sampling and
frame representations with bounded operators}, J. Math. Anal.
Appl., 463, 634-644 (2017).

\bibitem{Christensen.2019}
O. Christensen, M. Hasannasab and F. Philipp, \textit{Frames
properties of operator orbits}, Math. Nach., 2019.

\bibitem{Christensen.2019.}
O. Christensen, M. Hasannasab, \textit{Frame properties of systems
arising via iterated actions of operators}, Appl. Comput. Har.
Anal., 46 (3), 664-673 (2019).

\bibitem{Grochening.2001}
K. Gr\"{o}chening, Foundations of time-frequency analysis,
Birkh\"{a}user, Boston, 2001.

\bibitem{Heil.1990}
C. Heil, Wiener amalgam spaces in generalized harmonic analysis
and wavelet theory, Ph.D. Thesis, University of Maryland, 1990.

\end{thebibliography}
\end{document}